\journal{Applied Numerical Mathematics}
\newtheorem{thm}{Theorem}[section]
\newtheorem{lem}{Lemma}[section]
\newtheorem{proof}{Proof}
\newtheorem{exm}{Example}[section]
\numberwithin{equation}{section}
\numberwithin{figure}{section}
\numberwithin{algorithm}{section}
\begin{document}
\begin{frontmatter}
\title{On Kaczmarz method with oblique projection for solving large overdetermined linear systems \tnoteref{mytitlenote}}
\tnotetext[mytitlenote]{This research is supported by National Key Research and Development Program of China (grant number 2019YFC1408400), the Fundamental Research Funds for the Central Universities (grant number 19CX05003A-2) and the Fundamental Research Funds for the Central Universities (grant number 18CX02041A)}
\author[mymainaddress]{Weiguo Li}
\author[mymainaddress]{Qifeng Wang}
\author[mymainaddress]{Wendi Bao}
\cortext[mycorrespondingauthor]{Corresponding author}
\ead{baowendi@sina.com}
\author[mymainaddress]{Li Liu}
\address[mymainaddress]{College of Science, China University of Petroleum, Qingdao 266580, P .R. China}
\begin{abstract}
In this paper, an extension of Kaczmarz method, the Kaczmarz method with oblique projection (KO), is introduced and analyzed. Using this method, a number of iteration steps to solve the over-determined systems of linear equations are significantly reduced, and the the computing time is much saved, especially for those problems that contain some linear equations with near-linear correlation. Simultaneously, a randomized version--randomized Kaczmarz method with oblique projection (RKO) is established. The convergence proofs of these two methods are given and numerical experiments show the effectiveness of the two methods for uniformly distributed random data. Especially when the system has correlated
rows, the improvement of experimental results is very prominent.
\end{abstract}
\begin{keyword}
 Large linear system \sep Oblique projection method \sep Randomized Kaczmarz method \sep Correlation
\MSC[2010]   65H10 \sep 65F20 \sep 65J20
\end{keyword}

\end{frontmatter}

\section{Introduction}
\label{intro}
Consider solving an overdetermined consistent linear system of equations
\begin{align}\label{1.1}
Ax=b,
\end{align}
where the matrix $A\in \mathbb{R}{^{m \times n}}(m \ge n)$, $b\in\mathbb{R}{^m}$. Denote the rows of $A$ by $a_1^T$,$a_2^T$, ..., $a_m^T$ and let $b = \left( {{b_1},{b_2},...,{b_m}} \right)^T$. The Kaczmarz method \cite{K37} (or the algebraic reconstruction technique (ART) \cite{GBH70}) is one of the most popular solvers. At each iteration, the Kaczmarz method uses the cyclic rule to choose a row of the matrix and projects the current iteration onto the corresponding hyperplane.
The convergence rates for Kaczmarz-type algorithms are given by C. Popa \cite{P17}. In 2009, Strohmer and Vershynin \cite{KV09} proved firstly that the randomized Kaczmarz (RK) method converges with the linear rate when the rows are selected randomly with probability proportional to the square of the Euclidean norm of the rows.

As far back as 1954 \cite{A54}, the linear convergence of a greedy projection method, known as Motzkin's method (or the relaxation method) \cite{MS54}, was proved. This method was also called Kaczmarz method with the `maximal-residual control' in the numerical linear algebra literatures \cite{C81,NSV16}. A discussion on the full history of this method's linear convergence results and this greedy projection method is given in \cite{C81} or \cite{LHN17}.
There are at least two greedy selection rules: the maximum residual (MR) rule and the maximum distance (MD) rule, respectively:
\begin{equation}
i_k = \arg\max\limits_{i}|\langle a_{i}, x\rangle-b_i| \ \ (MR); \ \ \ \  i_k = \arg\max\limits_{i}|\langle a_{i}, x\rangle-b_i|/\|a_i\|_2 \ \ (MD)
\end{equation}
where $i_k$ is the row index that should be selected at the $kth$ iteration.

With the MR rule, the proof of linear convergence rate was provided by R. Ansorge in \cite{A84}. For further developments of Ansorge's Maximal residual algorithm, we can refer to \cite{PP16} and \cite{P17}. Recently, a series of effective probability criteria for selecting the working rows from the coefficient matrix are introduced. For example, the greedy randomized Kaczmarz (GRK) method \cite{BW18SISC,BW19NLAA}, the new randomized Kaczmarz (NRK) method \cite{GL20} are constructed. These methods converge to the unique least-norm solution of the linear system when it is consistent. Theoretical analysis demonstrates that the convergence rates of the GRK and NRK methods are much faster than that of the randomized Kaczmarz method \cite{BW18SISC,BW19NLAA,GL20}.

In the above methods, each iteration is an orthogonal projection. For coherent over-determined systems, the speed of iterative improvement is very slow. So it is very meaningful to introduce an accelerate projection to improve the convergence rate of this type of equations. In this paper, we introduce and analyze an oblique projection method to improve the convergence rate of the original Kaczmarz method.

 Here is a short outline of the paper: in Section 2, we introduce a new method--Kaczmarz method with oblique projection (KO) and present its algorithm and its convergence proof. Then, an randomized version of KO method (RKO) for coherent overdetermined systems and its proof of convergence rate are provided in Section 3. In Section 4, we present a variety of numerical experiments for over-determined systems of linear equations with uniformly distributed random data. The final section is devoted to some remarks and conclusions.

\section{Kaczmarz Method with Oblique Projection}
\subsection{Kaczmarz Method with Orthogonal Projection}

We use the following notations. $\|x\|$ is the Euclid norm of $x\in R^n$, $\|A\|=\max\limits_{\|x\|=1}\|Ax\|$ for $A\in R^{m\times n}$; $A^{\dag}$ is the Moore-Penrose inverse of $A$;
$A^T$ is the transpose of $A$; $R(A)$ is the range of the matrix $A$; $N(A)$ is the null space of the matrix $A$;
$P_C(x)$ is the orthogonal projection of $x$ onto $C$; $\sigma_{min}(A)$ is the smallest nonzero singular value of $A$;
$\tilde{x}$ is a solution of \eqref{1.1}; $x^*=A^{\dag}b$ is the least-norm solution of \eqref{1.1}.

The Kaczmarz algorithm with orthogonal projection is described as follows.

{\bf (Cyclic) Kaczmarz Algorithm

Initialization: $x^{(0)}\in R^n$,

Iterative step: for $k = 0, 1, \cdots$ select $i_{k+1} =k\ (mod\ m) + 1$ and compute $x^{(k+1)}$ as
\begin{equation}
x^{(k+1)}=x^{(k)}+\frac{b_{i_{k+1}}-\langle a_{i_{k+1}}, x^{(k)}\rangle}{\|a_{i_{k+1}}\|^2}a_{i_{k+1}}.
\end{equation}}

The next result came from Theorem 4 in \cite{P17} about the convergence rate of this algorithm.

\begin{lem}[\cite{P17}]
 \ Let $x^{(0)}\in R^n$ be an arbitrary initial approximation, $\tilde{x}$ is a solution of \eqref{1.1}
such that $P_{N(A)}(\tilde{x})= P_{N(A)}(x^{(0)})$, and the sequence $\{x^{(k)}\}$ is generated with this Kaczmarz
algorithm. Then, there exists a constant $\delta\in[0,1)$ such that
\begin{equation}\|x^{(k)}-\tilde{x}\|_2\leq\delta^{m_k},\end{equation}
where $m_k$ and $q_k\in\{0, 1,\cdots,m-1\}$ are (uniquely) defined by $k=m\cdot m_k+q_k$.
\end{lem}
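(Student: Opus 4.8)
The plan is to track the error vector $e^{(k)} = x^{(k)} - \tilde{x}$ and to show that one full sweep through the $m$ rows contracts it by a factor bounded away from $1$. Since $\tilde{x}$ solves \eqref{1.1}, we have $\langle a_i, \tilde{x}\rangle = b_i$ for every $i$, so subtracting $\tilde{x}$ from the iterative step and using $b_{i_{k+1}} = \langle a_{i_{k+1}}, \tilde{x}\rangle$ turns the update into
\begin{equation}
e^{(k+1)} = \left(I - \frac{a_{i_{k+1}} a_{i_{k+1}}^T}{\|a_{i_{k+1}}\|^2}\right) e^{(k)} =: P_{i_{k+1}} e^{(k)}.
\end{equation}
Each $P_i$ is the orthogonal projection onto the hyperplane $a_i^\perp$, hence $\|P_i\| = 1$ and $\|e^{(k+1)}\| \le \|e^{(k)}\|$; in particular the $q_k$ residual steps of an incomplete cycle never increase the error. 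First I would record that the hypothesis $P_{N(A)}(\tilde{x}) = P_{N(A)}(x^{(0)})$ forces $e^{(0)} = x^{(0)} - \tilde{x} \in N(A)^\perp = R(A^T)$, since its $N(A)$-component vanishes.

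Next I would study the one-cycle operator $Q = P_m P_{m-1}\cdots P_1$, the indices being taken in the cyclic order. Because each $a_i \in R(A^T)$, every $P_i$ maps $R(A^T)$ into itself, so $R(A^T)$ is $Q$-invariant and $e^{(0)} \in R(A^T)$ keeps all later errors in $R(A^T)$. The heart of the argument is the claim
\begin{equation}
\delta := \sup_{\substack{e \in R(A^T)\\ \|e\| = 1}} \|Q e\| < 1.
\end{equation}
The supremum is attained because the unit sphere of $R(A^T)$ is compact and $e \mapsto \|Qe\|$ is continuous, so it suffices to rule out $\|Qe\| = \|e\|$ for a unit vector $e \in R(A^T)$. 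If equality held, then since $Q$ is a product of norm-one orthogonal projections no norm could be lost at any stage, which forces each $P_i$ to fix the running vector, i.e. the vector is orthogonal to every $a_i$ and therefore lies in $N(A)$; but $R(A^T) \cap N(A) = \{0\}$ contradicts $\|e\| = 1$.

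I expect this last step---converting the norm-preservation equality into membership in $N(A)$---to be the main obstacle, since it relies on the observation that for an orthogonal projection $\|P_i v\| = \|v\|$ already implies $P_i v = v$, applied inductively along the whole cycle to conclude $\langle a_i, e\rangle = 0$ for all $i$. With $\delta$ in hand the conclusion is routine: writing $k = m\cdot m_k + q_k$, monotonicity discards the $q_k$ trailing steps and $Q$-invariance lets me iterate the contraction,
\begin{equation}
\|x^{(k)} - \tilde{x}\| = \|e^{(k)}\| \le \|e^{(m\, m_k)}\| = \|Q^{m_k} e^{(0)}\| \le \delta^{m_k}\,\|e^{(0)}\|,
\end{equation}
which yields the stated geometric decay with rate $\delta \in [0,1)$, the factor $\|e^{(0)}\| = \|x^{(0)} - \tilde{x}\|$ being absorbed into the constant.
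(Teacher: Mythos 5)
The paper never proves this lemma: it is quoted from \cite{P17} (it is Popa's Theorem 4), so there is no in-paper argument to compare yours against; what follows is an assessment of your proof on its own merits. Your argument is correct and is the standard one for cyclic Kaczmarz: subtracting $\tilde{x}$ turns the update into $e^{(k+1)}=P_{i_{k+1}}e^{(k)}$ with orthogonal projectors $P_i=I-a_ia_i^T/\|a_i\|^2$; the hypothesis $P_{N(A)}(\tilde{x})=P_{N(A)}(x^{(0)})$ places $e^{(0)}$ in $N(A)^{\perp}=R(A^T)$; each $P_i$ leaves $R(A^T)$ invariant, so the one-sweep operator $Q=P_m\cdots P_1$ acts on $R(A^T)$; compactness of the unit sphere of $R(A^T)$ gives an attained supremum $\delta$, and the rigidity step (for an orthogonal projection, $\|P_iv\|=\|v\|$ forces $P_iv=v$, applied along the cycle) shows a norm-preserving unit vector would lie in $R(A^T)\cap N(A)=\{0\}$, whence $\delta<1$. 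The bookkeeping also matches the algorithm as stated: $e^{(m\,m_k)}=Q^{m_k}e^{(0)}$ and the $q_k$ trailing steps are handled by monotonicity of the error norm under projections.

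The one loose end is your final sentence. What your argument actually delivers is $\|x^{(k)}-\tilde{x}\|\le\delta^{m_k}\|x^{(0)}-\tilde{x}\|$, and the factor $\|x^{(0)}-\tilde{x}\|$ cannot in general be ``absorbed'' into a rate $\delta'\in[0,1)$: for indices $k$ with $m_k=0$ the claimed bound reads $\|x^{(k)}-\tilde{x}\|\le 1$, which is false whenever the initial error exceeds $1$, and no choice of $\delta'<1$ repairs this. That said, this is a defect of the lemma as transcribed in the paper (which drops the constant) rather than of your reasoning; the inequality can only hold literally under a normalization such as $\|x^{(0)}-\tilde{x}\|\le 1$, and the form with the factor $\|x^{(0)}-\tilde{x}\|$, which is what your proof establishes, is the correct general statement.
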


In fact, the Kaczmarz method has the following convergence properties (see Theorem 1 in \cite{EHL81}).

\begin{lem}[\cite{EHL81}]
 \ If \eqref{1.1} is consistent, then the sequence $\{x^{(k)}\}$ generated with the Kaczmarz
algorithm converges to a solution of \eqref{1.1}. If, in addition, $x^{(0)}\in R(A^T)$, then $\{x^{(k)}\}$ converges to
the least-norm solution of \eqref{1.1}, i.e.,
\begin{equation*}\lim\limits_{k\rightarrow\infty}x^{(k)}=x^*.\end{equation*}
\end{lem}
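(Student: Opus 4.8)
The plan is to reduce the statement to the geometric rate bound of the preceding lemma (the result from \cite{P17}), the key being to track the component of the iterates lying in $N(A)$. First I would record the orthogonal direct-sum decomposition $\mathbb{R}^n = R(A^T) \oplus N(A)$ and observe that every Kaczmarz increment $x^{(k+1)}-x^{(k)}$ is a scalar multiple of $a_{i_{k+1}}$, which is a column of $A^T$ and hence lies in $R(A^T)$. Summing the increments gives $x^{(k)}-x^{(0)} \in R(A^T) = N(A)^{\perp}$ for every $k$, so the null-space component is frozen throughout the iteration: $P_{N(A)}(x^{(k)}) = P_{N(A)}(x^{(0)})$ for all $k$.

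For the first assertion I would use consistency to write the solution set as $x^{*}+N(A)$, where $x^{*}=A^{\dag}b \in R(A^T)$ satisfies $P_{N(A)}(x^{*})=0$. Setting $\tilde{x} := x^{*}+P_{N(A)}(x^{(0)})$ then produces the unique solution whose null-space part agrees with that of $x^{(0)}$, i.e. $P_{N(A)}(\tilde{x}) = P_{N(A)}(x^{(0)})$. This is precisely the hypothesis required by the previous lemma, so $\|x^{(k)}-\tilde{x}\|_2 \le \delta^{m_k}$ with $\delta \in [0,1)$. Since $m_k = \lfloor k/m \rfloor \to \infty$ as $k \to \infty$, the right-hand side tends to $0$, and therefore $x^{(k)} \to \tilde{x}$, a solution of \eqref{1.1}.

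For the second assertion I would specialize to $x^{(0)} \in R(A^T)$. Then $P_{N(A)}(x^{(0)})=0$, because $R(A^T)=N(A)^{\perp}$, so the limit identified above collapses to $\tilde{x}=x^{*}$, the least-norm solution, giving $\lim_{k\to\infty} x^{(k)} = x^{*}$.

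The argument is short once the preceding lemma is available; the convergence itself is inherited, so the only point demanding care is the identification of the limit. The main obstacle is verifying cleanly that $x^{*}$ is orthogonal to $N(A)$ and that the frozen null-space component pins down exactly which element of $x^{*}+N(A)$ is selected. This is the step that upgrades ``converges to a solution'' into ``converges to the least-norm solution'' under the extra hypothesis $x^{(0)} \in R(A^T)$.
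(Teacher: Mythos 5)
Your proof is correct, but note that the paper offers no proof of this statement at all: it is quoted as Theorem 1 of \cite{EHL81}, so there is no internal argument to compare with. What you have effectively done is make the lemma self-contained within the paper by deriving it from Lemma 2.1 (the rate result quoted from \cite{P17}). Your reduction is sound: the Kaczmarz increments are scalar multiples of rows of $A$, so $P_{N(A)}(x^{(k)})=P_{N(A)}(x^{(0)})$ for all $k$; consistency gives the solution set $x^*+N(A)$ with $x^*=A^{\dag}b\in R(A^T)$, hence $\tilde{x}=x^*+P_{N(A)}(x^{(0)})$ is the unique solution satisfying the hypothesis $P_{N(A)}(\tilde{x})=P_{N(A)}(x^{(0)})$ of Lemma 2.1; the bound $\delta^{m_k}$ with $m_k=\lfloor k/m\rfloor\to\infty$ then forces $x^{(k)}\to\tilde{x}$, and $x^{(0)}\in R(A^T)$ collapses $\tilde{x}$ to $x^*$. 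Two remarks on what each route buys. First, your argument yields more than the cited statement claims: it gives convergence at a linear (per-cycle) rate and exhibits the limit explicitly as $x^*+P_{N(A)}(x^{(0)})$ for an arbitrary starting point, whereas \cite{EHL81} establishes convergence by a direct analysis of the iteration itself, independently of any rate lemma. Second, your proof is exactly as strong as Lemma 2.1 as stated; that statement is itself slightly informal (the bound $\|x^{(k)}-\tilde{x}\|_2\le\delta^{m_k}$ omits the customary factor $\|x^{(0)}-\tilde{x}\|_2$), but since you only use $\delta^{m_k}\to 0$ as $k\to\infty$, your limiting argument is insensitive to such a constant factor and goes through.
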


\subsection{Kaczmarz Method with Oblique Projection}
Consider Kaczmarz method with oblique projection. A question is raised: if we have computed $x^{(k)}$, how to find the next iteration $x^{(k+1)}$ which is on the intersection of two hyperplanes? Here we give a simple strategy to find a next iteration $x^{(k+1)}$ that can converge to $\tilde{x}$ much quickly, where $\tilde{x}$ is a solution of the linear system \eqref{1.1}.

Our Kaczmarz method with oblique projection is described as follows (refer to Fig. 2.1):

Assume that $x^{(k)}$ is the $k$th iteration of solving the systems of equations \eqref{1.1}, and $x^{(k)}$ is on the hyperplane $\langle a_{i_k}, x\rangle=b_{i_k}$. Orthogonal projection from point $x^{(k)}$ to the hyperplane $\langle a_{i_{k+1}}, x\rangle=b_{i_{k+1}}$, and get the projection point $y^{(k)}$. And then orthogonal projection from point $y^{(k)}$ to the hyperplane $\langle a_{i_k}, x\rangle=b_{i_k}$, and get the projection point $z^{(k)}$. Let line L pass through point $x^{(k)}$ and along direction $w^{(i_k)}=z^{(k)}-x^{(k)}$, then the intersection of L and the hyperplane $\langle a_{i_{k+1}}, x\rangle=b_{i_{k+1}}$ is chosen as the next iteration point $x^{(k+1)}$ (denote $x^{(k+1)}$ as the oblique projection point of $x^{(k)}$ along $w^{(i_k)}$ to the hyperplane $\langle a_{i_{k+1}}, x\rangle=b_{i_{k+1}}$). In the following lemma, we will deduce the iterative formula:
\begin{equation}\label{itefor}
x^{(k+1)}=x^{(k)}+t_{k}w^{(i_k)},
\end{equation}
where $t_{k}$ is step size.
\begin{figure}[htpb]\label{fig:KO}
  \centering
  \includegraphics[width=0.6\textwidth]{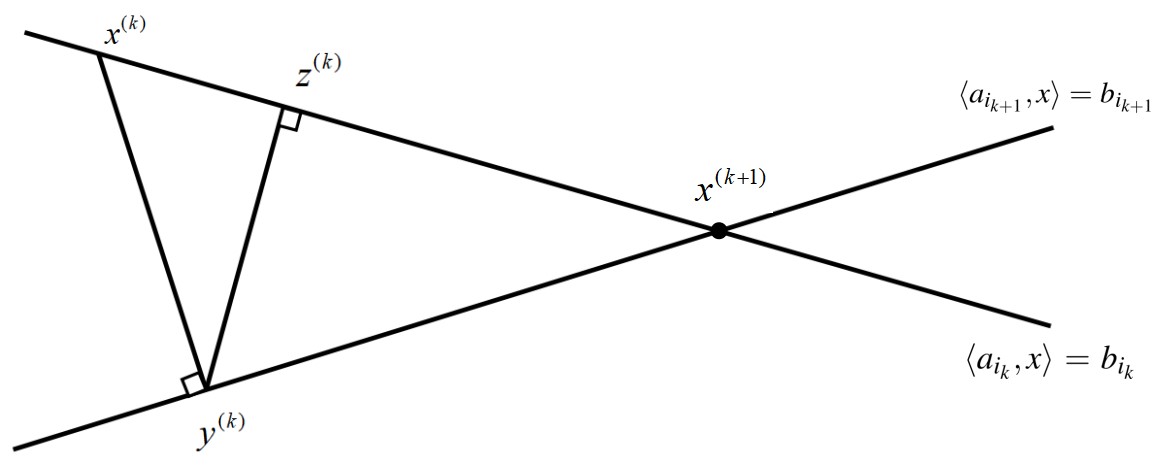}
\caption{KO, m=2}
\end{figure}

\begin{figure}\label{KO2}
\centering
\includegraphics[width=0.6\textwidth]{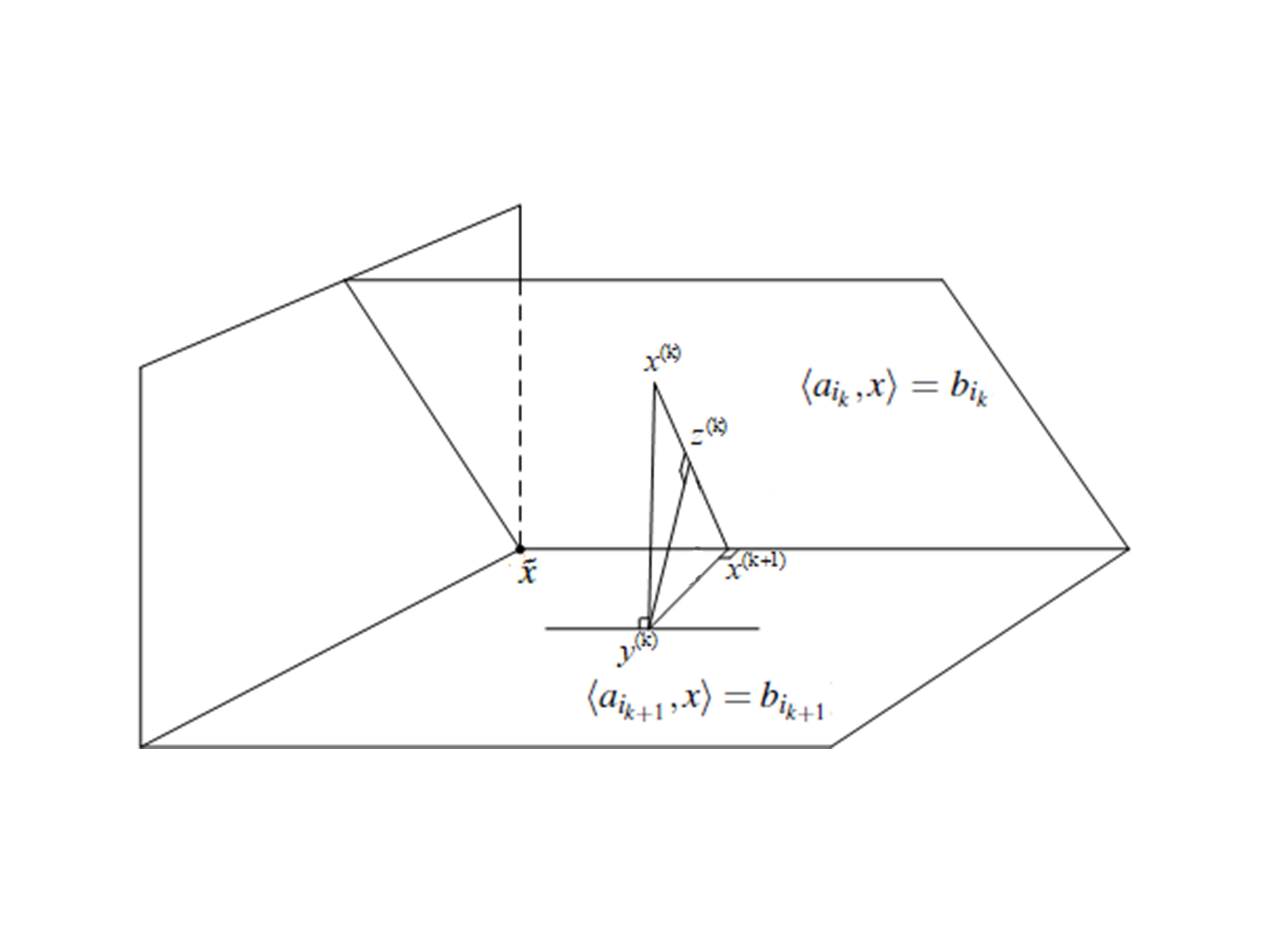}
\setlength{\abovecaptionskip}{0cm}
\setlength{\belowcaptionskip}{0pt}
\begin{center}\caption{KO,m=3}\end{center}
\end{figure}
\begin{lem}\label{lem2.3}
Let the step size $t_{k}$, the direction vector $w^{(i_k)}$,  $x^{(k)}$ and $x^{(k+1)}$ be the same as the definitions of the iterative scheme \eqref{itefor}, then in the KO method, we have
$$t_k=\frac{b_{i_{k+1}}-\langle a_{i_{k+1}}, x^{(k)}\rangle}{\langle a_{i_{k+1}}, w^{(i_k)}\rangle},
\ w^{(i_k)}={a_{i_{k+1}}}-\frac{\langle a_{i_{k}}, a_{i_{k+1}}\rangle}{\langle a_{i_k}, a_{i_{k}}\rangle}a_{i_k}.$$
\end{lem}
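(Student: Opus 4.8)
The plan is to translate the three-step geometric construction directly into algebra using the closed-form expression for an orthogonal projection onto a hyperplane, and then impose the membership condition that $x^{(k+1)}$ lies on the target hyperplane. Throughout I would exploit the standing hypothesis that $x^{(k)}$ already lies on the hyperplane $\langle a_{i_k}, x\rangle = b_{i_k}$, i.e. $\langle a_{i_k}, x^{(k)}\rangle = b_{i_k}$; this single identity is what makes the whole computation collapse neatly.

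First I would write the orthogonal projection of $x^{(k)}$ onto $\langle a_{i_{k+1}}, x\rangle = b_{i_{k+1}}$ as
$$y^{(k)} = x^{(k)} + \alpha\, a_{i_{k+1}}, \qquad \alpha := \frac{b_{i_{k+1}} - \langle a_{i_{k+1}}, x^{(k)}\rangle}{\|a_{i_{k+1}}\|^2},$$
which is just the standard one-step Kaczmarz update. Next I would project $y^{(k)}$ orthogonally back onto $\langle a_{i_k}, x\rangle = b_{i_k}$. Using $\langle a_{i_k}, x^{(k)}\rangle = b_{i_k}$, the residual simplifies to $b_{i_k} - \langle a_{i_k}, y^{(k)}\rangle = -\alpha\langle a_{i_k}, a_{i_{k+1}}\rangle$, so that
$$z^{(k)} = y^{(k)} - \frac{\alpha\langle a_{i_k}, a_{i_{k+1}}\rangle}{\|a_{i_k}\|^2}\, a_{i_k}.$$
Subtracting $x^{(k)}$ then gives $z^{(k)} - x^{(k)} = \alpha\bigl(a_{i_{k+1}} - \tfrac{\langle a_{i_k}, a_{i_{k+1}}\rangle}{\langle a_{i_k}, a_{i_k}\rangle}\, a_{i_k}\bigr)$.

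The one point that needs a word of care --- and the only place where the statement is not a verbatim transcription of the construction --- is the scalar $\alpha$ left over in $z^{(k)} - x^{(k)}$. Since $w^{(i_k)}$ enters only as the \emph{direction} of the line $L$, any nonzero scalar multiple defines the same line, so I would discard the factor $\alpha$ and adopt $w^{(i_k)} = a_{i_{k+1}} - \tfrac{\langle a_{i_k}, a_{i_{k+1}}\rangle}{\langle a_{i_k}, a_{i_k}\rangle}\, a_{i_k}$ as the stated direction (equivalently, $\alpha$ is absorbed into the step size). Finally, to pin down $t_k$ I would require $x^{(k+1)} = x^{(k)} + t_k w^{(i_k)}$ to satisfy $\langle a_{i_{k+1}}, x^{(k+1)}\rangle = b_{i_{k+1}}$; solving this single linear equation for $t_k$ immediately yields $t_k = \tfrac{b_{i_{k+1}} - \langle a_{i_{k+1}}, x^{(k)}\rangle}{\langle a_{i_{k+1}}, w^{(i_k)}\rangle}$, with the implicit well-posedness caveat that $\langle a_{i_{k+1}}, w^{(i_k)}\rangle \neq 0$, i.e. the two hyperplanes are not parallel.
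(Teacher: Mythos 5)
Your proposal is correct and follows essentially the same route as the paper's proof: compute $y^{(k)}$ and $z^{(k)}$ via the two orthogonal projections, identify $w^{(i_k)} = z^{(k)} - x^{(k)}$ up to a discarded scalar factor, and determine $t_k$ from the condition $\langle a_{i_{k+1}}, x^{(k+1)}\rangle = b_{i_{k+1}}$. The only cosmetic difference is that you pin down the coefficient of $a_{i_k}$ by expanding the residual $b_{i_k} - \langle a_{i_k}, y^{(k)}\rangle$ directly using $\langle a_{i_k}, x^{(k)}\rangle = b_{i_k}$, whereas the paper leaves it symbolic and recovers it from the orthogonality $\langle w^{(i_k)}, a_{i_k}\rangle = 0$; both arguments rest on the same standing hypothesis and yield the identical formula.
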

\begin{proof}
According to the above description of Kaczmarz method with oblique projection and Figure 2.1 and 2.2,
\begin{align*}
y^{(k)}=x^{(k)}+\frac{b_{i_{k+1}}-\langle a_{i_{k+1}},x^{(k)}\rangle}{\|a_{i_{k+1}}\|^2}a_{i_{k+1}},
\end{align*}
and
\begin{align*}
z^{(k)}=y^{(k)}+\frac{b_{i_{k}}-\langle a_{i_{k}},y^{(k)}\rangle}{\|a_{i_{k}}\|^2}a_{i_{k}}.
\end{align*}
Hence,
\begin{align*}
w^{(i_{k})}&=z^{(k)}-x^{(k)}\\
&=\frac{b_{i_{k+1}}-\langle a_{i_{k+1}},x^{(k)}\rangle}{\|a_{i_{k+1}}\|^2}a_{i_{k+1}}+\frac{b_{i_{k}}-\langle a_{i_{k}},y^{(k)}\rangle}{\|a_{i_{k}}\|^2}a_{i_{k}}\\
&=\lambda({a_{i_{k+1}}}+\mu{a_{i_{k}}}).
\end{align*}
For convenience, the factor $\lambda$ can be omitted, which does not affect the change of the unit direction of $w^{(i_{k})}$.
According to the orthogonality of $w^{(i_k)}$ and ${a_{i_{k}}}$, we get
$\mu=-\frac{\langle a_{i_{k}}, a_{i_{k+1}}\rangle}{\langle a_{i_k}, a_{i_{k}}\rangle}$, then
\begin{equation*}
w^{(i_k)}={a_{i_{k+1}}}-\frac{\langle a_{i_{k}}, a_{i_{k+1}}\rangle}{\langle a_{i_k}, a_{i_{k}}\rangle}a_{i_k}.
\end{equation*}
Taking an inner product on $a_{i_{k+1}}$ with both sides of the equation \eqref{itefor} and subtracting $b_{i_{k+1}}$, we get
\begin{align}\label{aik+1}
0=\langle a_{i_{k+1}},x^{(k)}\rangle-b_{i_{k+1}}+t_{k}\langle a_{i_{k+1}},w^{(i_{k})}\rangle,
\end{align}
so
\begin{align*}
t_k=\frac{b_{i_{k+1}}-\langle a_{i_{k+1}}, x^{(k)}\rangle}{\langle a_{i_{k+1}}, w^{(i_k)}\rangle}.
\end{align*}
The reason why the left equation of \eqref{aik+1} is equal to $0$ is that $x^{(k+1)}$ is on the hyperplane $\langle a_{i_{k+1}},x\rangle=b_{i_{k+1}}$, i.e. $\langle a_{i_{k+1}},x^{(k+1)}\rangle=b_{i_{k+1}}$.
\end{proof}
\begin{lem}\label{lem2.4}
Assume that $\tilde{x}$ is a solution of the linear system \eqref{1.1}, $w^{(i_k)}$ is the direction vector from $x^{(k)}$ to $x^{(k+1)}$ in the KO method. Then $w^{(i_k)}$ and $x^{(k+1)}-\tilde{x}$ are orthogonal, i.e., $\langle w^{(i_k)},x^{(k+1)}-\tilde{x} \rangle=0$.
\end{lem}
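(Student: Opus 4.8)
The plan is to reduce the claimed orthogonality to a pair of elementary hyperplane-membership statements. The key observation is that, by the formula for $w^{(i_k)}$ established in Lemma~\ref{lem2.3}, the direction vector lies in the two-dimensional span of $a_{i_k}$ and $a_{i_{k+1}}$. Consequently, it suffices to show that the error $x^{(k+1)}-\tilde{x}$ is orthogonal to each of $a_{i_k}$ and $a_{i_{k+1}}$ separately; orthogonality to the whole span, and in particular to $w^{(i_k)}$, then follows by linearity of the inner product.

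First I would verify orthogonality to $a_{i_{k+1}}$. Since $\tilde{x}$ solves \eqref{1.1} we have $\langle a_{i_{k+1}},\tilde{x}\rangle=b_{i_{k+1}}$, while $x^{(k+1)}$ was constructed precisely as the point on the hyperplane $\langle a_{i_{k+1}},x\rangle=b_{i_{k+1}}$ (this is exactly relation \eqref{aik+1} in the proof of Lemma~\ref{lem2.3}). Subtracting the two identities gives $\langle a_{i_{k+1}},x^{(k+1)}-\tilde{x}\rangle=0$ immediately.

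The second relation, orthogonality to $a_{i_k}$, is the only step requiring a short argument, and it is where the design of the oblique direction pays off. Writing $x^{(k+1)}=x^{(k)}+t_k w^{(i_k)}$ and pairing with $a_{i_k}$, I would use $\langle a_{i_k}, w^{(i_k)}\rangle=0$ — the defining property built into the choice of the coefficient $\mu$ in Lemma~\ref{lem2.3} — together with the standing assumption $\langle a_{i_k}, x^{(k)}\rangle=b_{i_k}$ to obtain $\langle a_{i_k}, x^{(k+1)}\rangle=b_{i_k}=\langle a_{i_k},\tilde{x}\rangle$, hence $\langle a_{i_k}, x^{(k+1)}-\tilde{x}\rangle=0$. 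In words: because the update moves along a direction tangent to the $i_k$-th hyperplane, the new iterate never leaves that hyperplane.

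Combining the two relations with $w^{(i_k)}\in\mathrm{span}\{a_{i_k},a_{i_{k+1}}\}$ closes the proof. I expect no real obstacle here; the only point that is easy to overlook is the second orthogonality, namely that $x^{(k+1)}$ still lies on the previous hyperplane $\langle a_{i_k},x\rangle=b_{i_k}$. This is not accidental but a direct consequence of having chosen $w^{(i_k)}$ orthogonal to $a_{i_k}$, so once that fact is recorded the remainder is a one-line computation.
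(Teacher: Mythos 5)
Your proposal is correct and follows essentially the same route as the paper: show the error $x^{(k+1)}-\tilde{x}$ is orthogonal to both $a_{i_k}$ and $a_{i_{k+1}}$ (i.e., that $x^{(k+1)}$ and $\tilde{x}$ lie on both hyperplanes), then conclude via $w^{(i_k)}\in\mathrm{span}\{a_{i_k},a_{i_{k+1}}\}$. The only difference is that you explicitly verify, using $\langle a_{i_k},w^{(i_k)}\rangle=0$ and $\langle a_{i_k},x^{(k)}\rangle=b_{i_k}$, that $x^{(k+1)}$ stays on the $i_k$-th hyperplane, whereas the paper simply asserts this from the geometric description of the algorithm --- your version is a touch more rigorous but not a different argument.
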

\begin{proof}
 On the basis of the description of the KO algorithm ( for the three dimensions, we can see Fig. 2.2 ), $x^{(k+1)}$ is the oblique projection point of $x^{(k)}$ along $w^{(i_{k})}$ to the hyperplane $\langle a_{i+1},x\rangle =b_{i+1}$. So the points $x^{(k+1)}$ and $\tilde{x}$ are on the hyperplane $\langle a_{i+1},x\rangle =b_{i+1}$, then $\langle a_{i+1},x^{(k+1)}-\tilde{x}\rangle =0$. In addition, the points $x^{(k+1)}$ and $\tilde{x}$ are also on the hyperplane $\langle a_{i},x\rangle =b_{i}$, therefore, $\langle a_{i},x^{(k+1)}-\tilde{x}\rangle =0$. According to the definition of $w^{(i_{k})}$, $\langle w^{(i_{k})}, x^{(k+1)}-\tilde{x}\rangle=0$.
\end{proof}

With Lemma \ref{lem2.3}, the algorithm is described as in Algorithm 2.1. Without losing generality, we assume that all rows of $A$ are not zero vectors.

\begin{algorithm}
  \leftline{\caption{Kaczmarz Method with Oblique Projection (KO)}}
  \label{alg:Framwork1}
  \begin{algorithmic}[1]
    \Require
      $A\in R^{m\times n}$, $b\in R^{m}$, $x^{(0)}\in R^n$, $K$, $\varepsilon>0$
    \State For $i=1:m$, $M(i)=\|a_i\|^2$
    \State Compute $x^{(1)}=x^{(0)}+\frac{b_1-\langle a_1, x^{(0)}\rangle}{M(1)}a_1$ and set $i_{k+1}=1$
    \For {$k=1,2,\cdots, K$}
    \State Set $i_k=i_{k+1}$ and choose a new $i_{k+1}$: $i_{k+1}=mod(k,m)+1$
    \State Compute $D_{i_k}=\langle a_{i_{k}}, a_{i_{k+1}}\rangle$ \ and $r_{i_{k+1}}^{(k)}=b_{i_{k+1}}-\langle a_{i_{k+1}}, x^{(k)}\rangle$
    \State Compute $w^{(i_k)}=a_{i_{k+1}}-\frac{D_{i_k}}{M(i_k)}a_{i_{k}}$ \ and \ $h_{i_k}(=\|w^{(i_k)}\|^2)=M(i_{k+1})-\frac{D_{i_k}}{M(i_k)}D_{i_k}$
    \If {$h_{i_k}>\varepsilon$}
    \State $\alpha_{i_k}^{(k)}=\frac{r_{i_{k+1}}^{(k)}}{h_{i_k}}$ \ and $x^{(k+1)}=x^{(k)}+\alpha_{i_k}^{(k)} w^{({i_k})}$
    \EndIf
    \EndFor
    \State Output $x^{(k+1)}$
  \end{algorithmic}
\end{algorithm}

Assume the system \eqref{1.1} is consistent, then it must be $b_{i_k}=\lambda b_{i_{k+1}}$ if the two rows of the coefficient matrix $A$ have relation $a_{i_k}=\lambda a_{i_{k+1}}$. In this case, the two hyperplanes $\langle a_{i_k},x\rangle=b_{i_k}$ and $\langle a_{i_{k+1}}, x\rangle=b_{i_{k+1}}$  are coincident, and we can eliminate one of them without affecting the solution of the equations. So in the following proof, we always assume that $0<\theta_{i_{k+1}}\leq\pi/2$, here $\theta_{i_{k+1}}$ is the angle between any two hyperplanes $\langle a_{i_k},x\rangle=b_{i_k}$ and $\langle a_{i_{k+1}}, x\rangle=b_{i_{k+1}}$. In the Algorithm 2.1, $$h_{i_k}=\|w^{(i_k)}\|^2=\frac{1}{\|a_{i_k}\|^2}\|a_{i_{k+1}}\|^2\|a_{i_k}\|^2\left(1-cos^2(\theta_{i_k})\right)=\|a_{i_{k+1}}\|^2sin^2(\theta_{i_{k+1}}),$$
thus $h_{i_k}\geq\epsilon>0$ ensures $\theta_{i_{k+1}}>0$ because $h_{i_k}=\|a_{i_{k+1}}\|^2sin^2(\theta_{i_{k+1}})$.

Before giving the proof of the convergence of the KO algorithm, we first restate the KO algorithm as the following process.
For $x^{(0)}\in R^{n}$ as an initial approximation we define $x^{(0,0)},x^{(0,1)},\cdots ,x^{(0,m)}\in R^{n}$ by
\begin{equation}\label{eq2.10}
\left\{
             \begin{array}{lr}
             x^{(0,0)}=x^{(0)}+\frac{b_1-\langle a_1,x^{(0)}\rangle}{\|a_1\|^2}a_1, &  \\
             x^{(0,1)}=x^{(0,0)}+\frac{b_2-\langle a_2,x^{(0,0)}\rangle}{\|w^{(1)}\|^2}w^{(1)},&\\
             x^{(0,2)}=x^{(0,1)}+\frac{b_3-\langle a_3,x^{(0,1)}\rangle}{\|w^{(2)}\|^2}w^{(2)},&\\
             \cdots \cdots \cdots \cdots \cdots \cdots \cdots \cdots \cdots &\\
             x^{(0,m-1)}=x^{(0,m-2)}+\frac{b_m-\langle a_m,x^{(0,m-2)}\rangle}{\|w^{(m-1)}\|^2}w^{(m-1)},&\\
             x^{(0,m)}=x^{(0,m-1)}+\frac{b_1-\langle a_1,x^{(0,m-1)}\rangle}{\|w^{(m)}\|^2}w^{(m)},
             \end{array}
\right.
\end{equation}
where
\begin{equation}\label{eq2.11}
  w^{(i)}=a_{i+1}-\frac{\langle a_{i+1},a_{i}\rangle}{\langle a_{i},a_{i}\rangle}a_{i},\,\, i=1,\cdots,m-1, \
  w^{(m)}=a_1-\frac{\langle a_1,a_{m}\rangle}{\langle a_{1},a_{m}\rangle}a_{m}.
\end{equation}
For convenience, we denote $a_ {m+1}\equiv a_ 1$, $b_ {m+1}\equiv b_ 1$. Then, for an arbitrary $p\ge 0$ and a given approximation $x^{(p,m)}\in R^{n}$ we construct the new ones $x^{(p+1,1)}$, $x^{(p+1,2)},\cdots ,x^{(p+1,m)}\in R^{n}$ by
\begin{equation}\label{eq2.12}
\left\{
             \begin{array}{lr}
             for \,\,\,\,i=1:m &  \\
             x^{(p+1,i)}=x^{(p+1,i-1)}+\frac{b_{i+1}-\langle a_{i+1},x^{(p+1,i-1)}\rangle}{\|w^{(i)}\|^2}w^{(i)},&\\
             end
             \end{array}
\right.
\end{equation}
with the notational convention
\begin{equation}\label{eq2.13}
  x^{(p+1,0)}=x^{(p,m)}.
\end{equation}
Obviously, $x^{(k+1)}=x^{(p,i)}$, if $k=p\cdot m+i, \ 0\le i<m$. The convergence of the KO method is provided as follows.
\begin{thm}
Let $x^{(0)}\in R^n$ be an arbitrary initial approximation, $\tilde{x}$ is a solution of \eqref{1.1}
such that $P_{N(A)}(\tilde{x})= P_{N(A)}(x^{(0)})$, and the sequence $\{x^{(k)}\}_{k=1}^{\infty}$ is generated with the KO
algorithm. Then,
\begin{equation}\label{eq2.14}
\lim\limits_{k\rightarrow\infty}x^{(k)}=\tilde{x}.\end{equation}
In addition, if $x^{(0)}\in R(A^T)$, then $\{x^{(k)}\}$ converges to
the least-norm solution of \eqref{1.1}, i.e.,
$$\lim\limits_{k\rightarrow\infty}x^{(k)}=x^*.$$
\end{thm}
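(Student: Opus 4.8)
The plan is to run a Pythagorean energy argument anchored at $\tilde{x}$ and then identify the limit by exploiting the cyclic sweep through all $m$ equations. First I would convert Lemma \ref{lem2.4} into a monotone error recursion. Since $x^{(k+1)}-x^{(k)}=t_k w^{(i_k)}$ is a scalar multiple of $w^{(i_k)}$, Lemma \ref{lem2.4} says that $x^{(k+1)}-\tilde{x}$ is orthogonal to $x^{(k+1)}-x^{(k)}$. Applying the Pythagorean theorem to $x^{(k)}-\tilde{x}=(x^{(k)}-x^{(k+1)})+(x^{(k+1)}-\tilde{x})$ gives
\begin{equation*}
\|x^{(k+1)}-\tilde{x}\|^2=\|x^{(k)}-\tilde{x}\|^2-\|x^{(k+1)}-x^{(k)}\|^2 .
\end{equation*}
Hence $\{\|x^{(k)}-\tilde{x}\|\}$ is nonincreasing and convergent, $\{x^{(k)}\}$ is bounded, and telescoping yields $\sum_k\|x^{(k+1)}-x^{(k)}\|^2\le\|x^{(0)}-\tilde{x}\|^2<\infty$, so in particular $\|x^{(k+1)}-x^{(k)}\|\to 0$.

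Next I would record that every increment lies in $R(A^T)$: each $w^{(i_k)}=a_{i_{k+1}}-\tfrac{\langle a_{i_k},a_{i_{k+1}}\rangle}{\langle a_{i_k},a_{i_k}\rangle}a_{i_k}$ is a combination of rows of $A$, and the initial correction is along $a_1$. Thus $x^{(k)}-x^{(0)}\in R(A^T)=N(A)^\perp$ for every $k$, so the null-space component $P_{N(A)}(x^{(k)})=P_{N(A)}(x^{(0)})=P_{N(A)}(\tilde{x})$ is frozen throughout the iteration; this is what will single out $\tilde{x}$ among all solutions.

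The core step identifies the limit. Since $\{x^{(k)}\}$ is bounded it has a convergent subsequence $x^{(k_j)}\to\bar{x}$, and passing to a further subsequence I may assume $k_j\equiv r\ (\mathrm{mod}\ m)$ is constant, so the indices used in the $m$ steps following $k_j$ are fixed and exhaust $\{1,\dots,m\}$. Because $\|x^{(k+1)}-x^{(k)}\|\to 0$, for each fixed offset $s\le m$ we get $x^{(k_j+s)}\to\bar{x}$. The oblique projection forces $\langle a_{i_{k+1}},x^{(k+1)}\rangle=b_{i_{k+1}}$ at every step (this is \eqref{aik+1} with the left side equal to $0$); applying it to the step of the sweep whose index is a given $\ell$ and letting $j\to\infty$ gives $\langle a_\ell,\bar{x}\rangle=b_\ell$ for all $\ell$, i.e. $A\bar{x}=b$. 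Then $\bar{x}-\tilde{x}\in N(A)$, while the frozen-component fact gives $P_{N(A)}(\bar{x})=P_{N(A)}(\tilde{x})$, whence $\bar{x}-\tilde{x}=P_{N(A)}(\bar{x}-\tilde{x})=0$, so $\bar{x}=\tilde{x}$. As the monotone sequence $\|x^{(k)}-\tilde{x}\|$ has a subsequence tending to $0$, the whole sequence converges, proving \eqref{eq2.14}. For the least-norm claim I would take $\tilde{x}=x^*=A^{\dag}b\in R(A^T)$: together with $x^{(0)}\in R(A^T)$ both $N(A)$-components vanish, so the hypothesis $P_{N(A)}(\tilde{x})=P_{N(A)}(x^{(0)})$ holds and the convergence just proved gives $x^{(k)}\to x^*$.

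The main obstacle I anticipate is the limit-identification step: the energy recursion alone shows only that the error norm stabilizes, not that it stabilizes at $0$, so the real work is converting $\|x^{(k+1)}-x^{(k)}\|\to 0$ together with the cyclic exhaustion of the equations into the statement that every accumulation point is a genuine solution. Care is needed to fix the residue class modulo $m$ before taking limits, so that the per-sweep index pattern is well defined, and to use the frozen $N(A)$-component (rather than any rate estimate, which the theorem does not require) to pick out $\tilde{x}$ among all solutions of \eqref{1.1}.
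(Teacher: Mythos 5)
Your proposal is correct and follows essentially the same route as the paper's proof: the Pythagorean error recursion from Lemma \ref{lem2.4}, boundedness and extraction of a convergent subsequence, using the vanishing increments together with the cyclic exhaustion of all $m$ hyperplanes to show every accumulation point solves \eqref{1.1}, and the frozen $N(A)$-component plus monotonicity to identify the limit as $\tilde{x}$. The only differences are notational (the paper reindexes the iteration into sweeps $x^{(p,i)}$ and lets the residuals $r^{(p,i)}\to 0$, whereas you fix a residue class of $k_j$ modulo $m$ and telescope $\|x^{(k+1)}-x^{(k)}\|^2$, which is the same quantity $|r|^2/\|w\|^2$).
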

\begin{proof}
According to \eqref{eq2.10}-\eqref{eq2.12} we obtain the sequence of approximations (from top to bottom and left to right, and by also using the notational convention \eqref{eq2.13})
\begin{equation}\label{eq2.15}
\left\{
             \begin{array}{lr}
             x^{(0)},x^{(0,0)}&  \\
             x^{(0,1)},x^{(0,2)},\cdots,x^{(0,m)}=x^{(1,0)} &  \\
             x^{(1,1)},x^{(1,2)},\cdots,x^{(1,m)}=x^{(2,0)} &  \\
             \cdots \cdots \cdots \cdots \cdots \cdots \cdots \cdots \cdots&\\
             x^{(p,1)},x^{(p,2)},\cdots,x^{(p,m)}=x^{(p+1,0)} &  \\
             \cdots \cdots \cdots \cdots \cdots \cdots \cdots \cdots \cdots
             \end{array}
\right.
\end{equation}
We define the numbers
\begin{equation}\label{eq2.16}
  r^{(p,i)}=b_{i+1}-\langle a_{i+1},x^{(p,i-1)} \rangle, \ i=1,2,\cdots,m, \ \forall p\ge 0.
\end{equation}
By using \eqref{eq2.10}-\eqref{eq2.16}, we can obtain
\begin{equation}\label{eq2.17}
  x^{(p,i)}=x^{(p,i-1)}+\frac{r^{(p,i)}}{\|w^{(i)}\|^2}w^{(i)},\,\,\,p\ge 0, \ i=1,2,\cdots,m.
\end{equation}
With Lemma \ref{lem2.4}, we have
\begin{equation}\label{eq2.18}
  \langle  x^{(p,i-1)}- x^{(p,i)}, x^{(p,i)}-\tilde{x}\rangle=0.
\end{equation}
Therefore, from $x^{(p,i-1)}- \tilde{x}=x^{(p,i-1)}-x^{(p,i)}+ x^{(p,i)}-\tilde{x}$, it is easy  to see
$$ \|x^{(p,i-1)}-\tilde{x}\|^2=\|x^{(p,i-1)}-x^{(p,i)}\|^2+\|x^{(p,i)}-\tilde{x}\|^2.$$
From \eqref{eq2.17}, we get
\begin{equation}\label{eq2.19}
  \|x^{(p,i-1)}-\tilde{x}\|^2=\|x^{(p,i)}-\tilde{x}\|^2+\frac{|r^{(p,i)}|^2}{\|w^{(i)}\|^2}.
\end{equation}
Obviously, the sequence $\{\|x^{(p,i)}-\tilde{x}\|\}_{p=0, i=0}^{\infty, m}$, i.e., $\{\|x^{(k+1)}-\tilde{x}\|\}_{k=1}^{\infty}$ is a monotonically decreasing sequence with lower bounds. There exists a $\alpha \ge 0 $ such that
\begin{equation}\label{eq2.20}
  \lim_{p\rightarrow\infty}\|x^{(p,i)}-\tilde{x}\|=\alpha\ge 0, \ \ \forall \ i=0,1,\cdots,m-1.
\end{equation}
Thus, from \eqref{eq2.19} and because $i$ was arbitrary we get
\begin{equation}\label{eq2.21}
  \lim_{p\rightarrow\infty}r^{(p,i)}= 0,\ \ \forall \ i=0,1,\cdots,m-1.
\end{equation}
Because the sequence $\{\|x^{(p,i)}-\tilde{x}\|\}_{p=0, i=0}^{\infty, m-1}$ is bounded, we obtain
\begin{equation}\label{eq2.22}
  \|x^{(p,i)}\|\le \|\tilde{x}\|+\|x^{(p,i)}-\tilde{x}\|\le \|\tilde{x}\|+\|x^{(0,1)}-\tilde{x}\|,\,\, \forall p\ge 0.
\end{equation}
According to the convention \eqref{eq2.22} we get that the sequence $\{x^{(p,0)}\}_{p=0}^{\infty}$ is bounded, thus there exists a convergent subsequence $\{x^{(p_j, 0)}\}_{j=1}^{\infty}$, let's denote it as
\begin{equation}\label{eq2.23}
  \lim_{j\rightarrow\infty}x^{(p_j, 0)}=\hat{x}.
\end{equation}
But, from \eqref{eq2.17} we get
\begin{equation}\label{eq2.24}
  x^{(p_j,1)}=x^{(p_j,0)}-\frac{r^{(p_j,1)}}{\|w^{(2)}\|^2}w^{(2)},\,\,\,\forall \ j>0.
\end{equation}
thus, by taking the limit following $j$ and using \eqref{eq2.21}, \eqref{eq2.23}
\begin{equation}\label{eq2.25}
  \lim_{j\rightarrow\infty}x^{(p_j, 1)}=\hat{x}.
\end{equation}
With the same way we obtain
\begin{equation}\label{eq2.26}
  \lim_{j\rightarrow\infty}x^{(p_j,i)}=\hat{x},\,\,\,\,\forall \ i=0, 1,\cdots, m-1.
\end{equation}
Then, from \eqref{eq2.26} we get for any $i=1, \cdots, m$
\begin{equation}\label{eq2.27}
  \lim_{j\rightarrow\infty}\langle x^{(p_j,i-1)},a_{i+1}\rangle-b_{i+1}=\langle\hat{x},a_{i+1}\rangle-b_{i+1},
\end{equation}
and from \eqref{eq2.16} and \eqref{eq2.21}
\begin{equation}\label{eq2.28}
  \lim_{j\rightarrow\infty}\langle x^{(p_j,i-1)},a_{i+1}\rangle-b_{i+1}=0, \ \ i=1, \ \cdots, \ m.
\end{equation}
Thus, from \eqref{eq2.27}$-$\eqref{eq2.28} it results in
\begin{equation}\label{eq2.29}
  \langle\hat{x},a_{i+1}\rangle-b_{i+1}=0,\,\,\,\forall \ i=1,\cdots,m,
\end{equation}
that is
\begin{equation}\label{eq2.30}
  A\hat{x}=b.
\end{equation}
With the use of the iterative relations
$$x^{(0,1)}=x^{(0,0)}+\frac{b_1-\langle a_1,x^{(0,0)}\rangle}{\|w^{(1)}\|^2}w^{(1)},$$
and
$$x^{(p,i)}=x^{(p,i-1)}+\frac{r^{(p,i)}}{\|w^{(i)}\|^2}w^{(i)},$$
$w^{(i)}$ and $w^{(1)}$ are defined in \eqref{eq2.11}. It is easy to deduce that $P_{N(A)}(x^{(k)})= P_{N(A)}(x^{(0)})$, and so
\begin{equation}\label{eq2.31}
P_{N(A)}(\hat{x})= P_{N(A)}(x^{(0)}).
\end{equation}
From the hypothesis of the theorem, we know that
\begin{equation}\label{eq2.32}
A\tilde{x}=b, \ \ P_{N(A)}(\tilde{x})=P_{N(A)}(x^{(0)}).
\end{equation}
By \eqref{eq2.30}$-$\eqref{eq2.32}, we get
$$\lim\limits_{j\rightarrow\infty}x^{(p_j,i)}-\hat{x}=\lim\limits_{j\rightarrow\infty}x^{(p_j,i)}-\tilde{x}=0, \ \forall \ i=0,1,\cdots,m-1.$$
If we set $k_j=p_j\cdot m+i$, then $\lim\limits_{j\rightarrow\infty}\|x^{(k_j)}-\tilde{x}\|=\alpha=0$. Based on monotonicity, $\lim\limits_{k\rightarrow\infty}\|x^{(k)}-\tilde{x}\|=0$, so the sequence $\{x^{(k)}\}$ is convergent to $\tilde{x}$.

In addition, if $x^{(0)}\in R(A^T)$, then $P_{N(A)}(x^{(0)})=0$ and so $\{x^{(k)}\}$ converges to
the least-norm solution of \eqref{1.1}, i.e.,
$$\lim\limits_{k\rightarrow\infty}x^{(k)}=x^*.$$

\end{proof}

{\bf Remark 1.} \ For the Kaczmarz method, it holds $\|x^{(k+1)}-\tilde{x}\|^2=\|x^{(k)}-\tilde{x}\|^2-\frac{{r_{i_{k+1}}^{(k)}}^2}{\|a_{i_{k+1}}\|^2}$, and the KO method holds $\|x^{(k+1)}-\tilde{x}\|^2=\|x^{(k)}-\tilde{x}\|^2-\frac{{r_{i_{k+1}}^{(k)}}^2}{\|a_{i_{k+1}}\|^2}\frac{1}{sin^2(\theta_{i_{k+1}})}$. So the KO method is faster than the Kaczmarz method if $0<\theta_{i_{k+1}}<\pi/2$.

{\bf Remark 2.} \ When the coefficient matrix $A$ is a matrix with orthogonal rows, the KO algorithm degenerates to the Kaczmarz algorithm (right now, $y^{(k)}=z^{(k)}=x^{(k+1)}$).
Generally, we use the KO method in two ways: one is online mode, and the other is preprocessing mode.

(1) Online mode. Each iteration only uses all the information of two adjacent equations, and there is no preprocessing information. Considering that the information of one equation is shared by two adjacent iterations, the KO method takes about $10n+2$ flops per iteration step. In this case, the Kaczmarz method algorithm needs $6n-1$ floating-point operations per step.

 (2) Preprocessing mode. Because the norm of row vector of matrix $A$, the inner product of two adjacent row vectors, the direction $w^{(i_k)}$ and its norm are fixed, these can be calculated in advance. After preprocessing (assuming that the above quantities have been calculated), the amount of floating-point number operation of the KO method in each step is $4n+1$ (only $r_{i_{k+1}}^{(k)}$ and $\alpha_{i_k}^{(k)}$ need to be calculated).  In this case, the workload of the KO method per iterative step is the same as that of the Kaczmarz method. But the total cost of pretreatment for the KO method is about $6mn+m$, while that for the Kaczmarz method is $2mn-1$.

 {\bf Remark 3.} \ Although the workload of the KO method in each step is more than or equals to that of the Kaczmarz method, compared with the Kaczmarz method, the iteration steps of the KO method are significantly reduced, especially for those problems which contain some linear equations with near linear correlation. See Example 2.1 and numerical experiments in Section 4.

\begin{exm}\label{EX2.1}\upshape
Consider the following systems of linear equations with two equations
\begin{equation}\label{Ex1.1}
\left\{
\begin{array}{rcl}
7x_1-8x_2&=&-1,\\
8x_1-7x_2&=&1
\end{array}
\right.
\end{equation}
and
\begin{equation}\label{Ex1.2}
\left\{
\begin{array}{rrl}
7x_1+8x_2&=&15,\\
140x_1+159x_2&=&299.
\end{array}
\right.
\end{equation}
\end{exm}
The two equations in system \eqref{Ex1.2} are close to correlation. So if the Kaczmarz method is used, $817$ steps are needed for the system (2.33) and $940,627$ steps are needed for the system (2.34) to reach the error requirement $\|x^{(k)}-x^*\|\leq\frac{1}{2}\times 10^{-6}$; but with the use of the KO method, both systems need only one step to get the exact solutions.

\section{Randomized Kaczmarz Method with Oblique Projection}
If the row index $i_{k+1}$ in Algorithm 2.1 is randomly selected, we get a randomized Kaczmarz method with oblique projection and its convergence as follows. Based on the relationship of the KO and the RKO methods, we can easily prove the expected convergence rate of the RKO method.
\begin{algorithm}
  \leftline{\caption{Randomized Kaczmarz Method with Oblique Projection (RKO)}}
  \label{alg:Framwork1}
  \begin{algorithmic}[1]
    \Require
      $A\in R^{m\times n}$, $b\in R^{m}$, $x^{(0)}\in R^n$, $K$, $\varepsilon$
    \State For $i=1:m$, $M(i)=\|a_i\|^2$
    \State Randomly select $i_1$, and compute $x^{(1)}=x^{(0)}+\frac{b_{i_1}-\langle a_{i_1}, x^{(0)}\rangle}{M(i_1)}a_{i_1}$
    \State Randomly select $i_2\not=i_{1}$, and compute $w^{(i_1)}=a_{i_2}-\frac{\langle a_{i_2}, a_{i_1}\rangle}{\|a_{i_1}\|^2}a_{i_1}$, $x^{(2)}=x^{(1)}+\frac{b_{i_2}-\langle a_{i_2}, x^{(1)}\rangle}{\|w^{(i_1)}\|^2}w^{(i_1)}$
    \For {$k=2,3,\cdots, K$}
    \State Randomly select $i_{k+1}$ ($i_{k+1}\not=i_{k}, \ i_{k-1}$) \ (uniformly at random)
    \State Compute $D_{i_k}=\langle a_{i_{k}}, a_{i_{k+1}}\rangle$, \ $r_{i_{k+1}}^{(k)}=b_{i_{k+1}}-\langle a_{i_{k+1}}, x^{(k)}\rangle$
    \State Compute $w^{(i_k)}=a_{i_{k+1}}-\frac{D_{i_k}}{M(i_{k})}a_{i_{k}}$, $h_{i_k}=M(i_{k+1})-\frac{D_{i_k}}{M(i_{k})}D_{i_k}$ ($=\|w^{(i_k)}\|^2$)
    \If {$h_{i_k}>\varepsilon$}
    \State $\alpha_{i_k}^{(k)}=\frac{r_{i_{k+1}}^{(k)}}{h_{i_k}}$ \ and $x^{(k+1)}=x^{(k)}+\alpha_{i_k}^{(k)} w^{(i_k)}$
    \EndIf
    \EndFor
    \State Output $x^{(k+1)}$
  \end{algorithmic}
\end{algorithm}

\begin{lem}
  Let $x^{(0)}\in R(A^T)$ be an arbitrary initial approximation, $x^*$ is the least-norm solution of \eqref{1.1} ($m>2$). We select $i_{k+1}\not=i_{k}, i_{k-1}$ uniformly at random and compute the next iteration $x^{(k+1)}=x^{(k)}+r_{i_{k+1}}^{(k)}\frac{w^{(i_k)}}{\|w^{(i_k)}\|^2}$, then we obtain the bound on the following expected conditional on the first $k$ ($k\ge 2$) iterations of the RKO method
$$E_k\frac{{r_{i_{k+1}}^{(k)}}^2}{\|w^{(i_k)}\|^2}\geq\frac{\sigma^2_{min}\|x^{(k)}-x^*\|^2}{(m-2)(\|A\|_F^2-\sigma^2_{min})}.$$
\end{lem}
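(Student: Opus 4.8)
The plan is to evaluate the conditional expectation as a uniform average and reduce the claim to two ingredients: a lower bound on the total residual energy $\sum_j {r_j^{(k)}}^2$, and a uniform upper bound on the step denominator $\|w^{(i_k)}\|^2$. First I would use the consistency of \eqref{1.1} to write, for every index $j$, the residual as $r_j^{(k)}=b_j-\langle a_j,x^{(k)}\rangle=\langle a_j,x^{(k)}-x^*\rangle$. Since $i_{k+1}$ is drawn uniformly among the $m-2$ indices different from $i_k$ and $i_{k-1}$, and since the direction built at this step depends on the choice $i_{k+1}=j$ (write it $w^{(i_k)}(j)=a_j-\tfrac{\langle a_{i_k},a_j\rangle}{\|a_{i_k}\|^2}a_{i_k}$),
$$E_k\frac{{r_{i_{k+1}}^{(k)}}^2}{\|w^{(i_k)}\|^2}=\frac{1}{m-2}\sum_{j\neq i_k,i_{k-1}}\frac{{r_j^{(k)}}^2}{\|w^{(i_k)}(j)\|^2}.$$
The observation that makes the excluded indices harmless is that $x^{(k)}$ lies simultaneously on the hyperplanes $\langle a_{i_k},x\rangle=b_{i_k}$ and $\langle a_{i_{k-1}},x\rangle=b_{i_{k-1}}$: the former because $x^{(k)}$ is the oblique projection onto it, and the latter because the previous update direction $w^{(i_{k-1})}$ is orthogonal to $a_{i_{k-1}}$ (the same mechanism as in Lemma \ref{lem2.4}). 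Hence $r_{i_k}^{(k)}=r_{i_{k-1}}^{(k)}=0$, and therefore $\sum_{j\neq i_k,i_{k-1}}{r_j^{(k)}}^2=\sum_{j=1}^m{r_j^{(k)}}^2=\|A(x^{(k)}-x^*)\|^2$.

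Next I would bound every denominator by the single constant $\|A\|_F^2-\sigma_{min}^2$. Using $\|w^{(i_k)}(j)\|^2=\|a_j\|^2-\langle a_{i_k},a_j\rangle^2/\|a_{i_k}\|^2\le\|a_j\|^2\le\|A\|^2$ and writing the nonzero singular values as $\sigma_1\ge\cdots\ge\sigma_r>0$ with $\sigma_r=\sigma_{min}$ and $\sigma_1=\|A\|$, the identity $\|A\|_F^2-\sigma_{min}^2=\sum_{p=1}^{r-1}\sigma_p^2\ge\sigma_1^2=\|A\|^2$ yields $\|w^{(i_k)}(j)\|^2\le\|A\|_F^2-\sigma_{min}^2$ for every admissible $j$. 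The step $\sum_{p=1}^{r-1}\sigma_p^2\ge\sigma_1^2$ requires $r=\mathrm{rank}(A)\ge 2$, which is exactly guaranteed by the standing non-degeneracy assumption $0<\theta_{i_{k+1}}\le\pi/2$ (non-coincident rows). Consequently each summand obeys ${r_j^{(k)}}^2/\|w^{(i_k)}(j)\|^2\ge{r_j^{(k)}}^2/(\|A\|_F^2-\sigma_{min}^2)$.

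Finally I would combine the two estimates. Because $x^{(0)}\in R(A^T)$ and every update adds a linear combination of rows of $A$, we have $x^{(k)}-x^*\in R(A^T)=N(A)^\perp$, so $\|A(x^{(k)}-x^*)\|^2\ge\sigma_{min}^2\|x^{(k)}-x^*\|^2$. Substituting the two bounds,
$$E_k\frac{{r_{i_{k+1}}^{(k)}}^2}{\|w^{(i_k)}\|^2}\ge\frac{1}{m-2}\cdot\frac{\|A(x^{(k)}-x^*)\|^2}{\|A\|_F^2-\sigma_{min}^2}\ge\frac{\sigma_{min}^2\|x^{(k)}-x^*\|^2}{(m-2)(\|A\|_F^2-\sigma_{min}^2)},$$
which is the claimed inequality. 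The main obstacle is the uniform denominator bound $\|w^{(i_k)}(j)\|^2\le\|A\|_F^2-\sigma_{min}^2$: it is the step that genuinely uses the rank-two (non-parallel hyperplane) hypothesis together with the splitting $\|A\|_F^2=\sum_p\sigma_p^2$, and it is precisely the reason the bound carries $\|A\|_F^2-\sigma_{min}^2$ in place of the $\|A\|_F^2$ that appears in the plain randomized Kaczmarz estimate.
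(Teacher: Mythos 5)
Your proof is correct, and it shares the paper's overall skeleton: you write $E_k$ as the uniform average over the $m-2$ admissible indices, use $r_{i_k}^{(k)}=r_{i_{k-1}}^{(k)}=0$ (justified, as in the paper, by $x^{(k)}$ lying on both hyperplanes $i_k$ and $i_{k-1}$) to restore the full residual sum, and finish with $\|A(x^{(k)}-x^*)\|\ge\sigma_{min}\|x^{(k)}-x^*\|$ on $R(A^T)$. Where you genuinely diverge is the central denominator estimate. The paper lower-bounds the average of ratios by the ratio of sums (the mediant-type inequality $\sum_s N_s/D_s\ge(\sum_s N_s)/(\sum_s D_s)$) and then uses the exact identity $\sum_{s=1}^m\|w_s\|^2=\|A\|_F^2-\|Aa_{i_k}\|^2/\|a_{i_k}\|^2\le\|A\|_F^2-\sigma^2_{min}$, the last step because $a_{i_k}\in R(A^T)$; there the constant $\|A\|_F^2-\sigma^2_{min}$ emerges from the collective structure of the directions $w_s$. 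You instead bound each denominator separately via $\|w^{(i_k)}(j)\|^2\le\|a_j\|^2\le\|A\|^2=\sigma_1^2\le\sum_{p=1}^{r-1}\sigma_p^2=\|A\|_F^2-\sigma^2_{min}$, where the last inequality needs $\mathrm{rank}(A)\ge 2$, which you correctly trace to the standing assumption of nonzero, pairwise non-parallel rows together with $m>2$. Your route is more elementary (no mediant inequality, no sum identity) and in fact establishes the stronger intermediate bound with $(m-2)\|A\|^2$ in the denominator, since $\|A\|^2\le\|A\|_F^2-\sigma^2_{min}$; your last relaxation is a deliberate weakening to match the stated constant. An incidental benefit: your argument sidesteps the paper's slightly loose step in which the restricted sum $\sum_{s\neq i_k,i_{k-1}}\|w_s\|^2$ is replaced by the full sum with an equality sign (it is really an inequality, though in the direction that does not harm the paper's chain of bounds).
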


\begin{proof}
 Due to $x^{(0)}\in R(A^T)$ and $x^*$ is the least-norm solution of equations \eqref{1.1}, $x^{(k)}-x^*\in R(A^T)$.
\begin{equation}
\begin{array}{rl}
E_k\frac{{r_{i_{k+1}}^{(k)}}^2}{\|w^{(i_k)}\|^2}=&\frac{1}{m-2}\sum\limits_{s=1, s\not=i_{k}, i_{k-1}}^m\frac{|r_{s}^{(k)}|^2}{\|w_{s}\|^2} \ (r_{i_{k}}^{(k)}=0 \ and \ r_{i_{k-1}}^{(k)}=0) \\
\geq&\frac{1}{m-2}\frac{\sum\limits_{s=1, s\not=i_{k}, i_{k-1}}^m|r_{s}^{(k)}|^2}{\sum\limits_{s=1, s\not=i_{k}, i_{k-1}}^m\|w_{s}\|^2}\\
=&
\frac{1}{m-2}\frac{\sum\limits_{s=1}^m|r_{s}^{(k)}|^2}{\sum\limits_{s=1}^m\|w_{s}\|^2}\\
=&\frac{1}{m-2}\frac{\|b-Ax^{(k)}\|^2}{\|A\|_F^2-\frac{\|Aa_{i_{k}}\|^2}{\|a_{i_{k}}\|^2}} \ (where \ b-Ax^{(k)}=A(x^*-x^{(k)}))\\
\geq&\frac{\sigma^2_{min}\|x^{(k)}-x^*\|^2}{(m-2)\left(\|A\|_F^2-\sigma^2_{min}\right)}.
 \end{array}
\end{equation}
Since $x^{(k)}$ is on the intersection of hyperplanes $\langle a_{i_{k}}, x\rangle=b_{i_{k}}$ and $\langle a_{i_{k-1}}, x\rangle=b_{i_{k-1}}$, we have $r_{i_{k}}^{(k)}=0 \ and \ r_{i_{k-1}}^{(k)}=0$. Thus the first and second equalities are valid. With $\sum\limits_{s=1}^m\|w_{s}\|^2=\|A\|_F^2-\sigma^2_{min}$, the last equality holds. The first inequality uses the conclusion of $\frac{|b_1|}{|a_1|}+\frac{|b_2|}{|a_2|}\geq\frac{|b_1|+|b_2|}{|a_1|+|a_2|}$ (if $|a_1|>0$, $|a_2|>0$), and the second one uses the conclusion of $\|Az\|_2\geq\sigma_{min}(A)\|z\|_2$, if $z\in R(A^T)$.
\end{proof}

\begin{thm}
  Assume that the system \eqref{1.1} is consistent, $m>2$ and $x^{(0)}\in R(A^T)$. Then the RKO method converges to the least-norm solution of equations \eqref{1.1} in expectation and has the following bound
$$E_k\|x^{(k+1)}-x^*\|^2\leq\left(1-\frac{\sigma^2_{min}}{(m-2)(\|A\|_F^2-\sigma^2_{min})}\right)\|x^{(k)}-x^*\|^2, \ \ k\ge 2.$$
\end{thm}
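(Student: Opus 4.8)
The plan is to reduce the theorem to the exact one-step identity for the squared error and then to feed in the expectation bound from the previous lemma. First I would apply Lemma \ref{lem2.4} with the particular solution $\tilde{x}=x^{*}$, which is legitimate since the system is consistent and $x^{*}=A^{\dag}b$ solves it; this gives $\langle w^{(i_k)}, x^{(k+1)}-x^{*}\rangle=0$. Decomposing $x^{(k)}-x^{*}=(x^{(k)}-x^{(k+1)})+(x^{(k+1)}-x^{*})$ and noting that $x^{(k)}-x^{(k+1)}=-\alpha_{i_k}^{(k)}w^{(i_k)}$ is parallel to $w^{(i_k)}$ while the second summand is orthogonal to it, Pythagoras yields
\begin{equation*}
\|x^{(k+1)}-x^{*}\|^2=\|x^{(k)}-x^{*}\|^2-\frac{(r_{i_{k+1}}^{(k)})^2}{\|w^{(i_k)}\|^2},
\end{equation*}
the randomized counterpart of \eqref{eq2.19} with $x^{*}$ in place of $\tilde{x}$ (equivalently, the identity recorded in Remark 1).

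Next I would take the conditional expectation $E_k$ over the random draw of $i_{k+1}\neq i_k,i_{k-1}$, given the first $k$ iterations. Since $\|x^{(k)}-x^{*}\|^2$ is determined by the history, it passes outside $E_k$, and
\begin{equation*}
E_k\|x^{(k+1)}-x^{*}\|^2=\|x^{(k)}-x^{*}\|^2-E_k\frac{(r_{i_{k+1}}^{(k)})^2}{\|w^{(i_k)}\|^2}.
\end{equation*}
Inserting the lower bound of the previous lemma for the subtracted expectation and factoring out $\|x^{(k)}-x^{*}\|^2$ produces the stated contraction. To invoke that lemma I must first verify its hypothesis $x^{(k)}-x^{*}\in R(A^{T})$: this holds because $x^{(0)}\in R(A^{T})$ and every increment $w^{(i_k)}$ is a linear combination of rows of $A$, so each $x^{(k)}$ stays in $R(A^{T})$ and $P_{N(A)}(x^{(k)})=P_{N(A)}(x^{(0)})=0$, exactly as in the argument around \eqref{eq2.31}.

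Finally, for convergence in expectation I would take total expectations and iterate the contraction. Writing the rate as $1-c$ with $c=\frac{\sigma^2_{min}}{(m-2)(\|A\|_F^2-\sigma^2_{min})}$, we have $c>0$ (since $\sigma_{min}>0$ and $\|A\|_F^2>\sigma^2_{min}$), so the factor is strictly below $1$; moreover the exact identity forces $E_k\frac{(r_{i_{k+1}}^{(k)})^2}{\|w^{(i_k)}\|^2}\le\|x^{(k)}-x^{*}\|^2$ because the post-step squared error is nonnegative, whence $c\le 1$ and the factor lies in $[0,1)$. Iterating then gives $E\|x^{(k)}-x^{*}\|^2\to 0$ geometrically, i.e. convergence to the least-norm solution.

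The hard part is already behind us: the genuine work sits in the previous lemma, through the elementary bound $\frac{|b_1|}{|a_1|}+\frac{|b_2|}{|a_2|}\geq\frac{|b_1|+|b_2|}{|a_1|+|a_2|}$ together with $\sum_{s}\|w_s\|^2=\|A\|_F^2-\sigma^2_{min}$ and the singular-value inequality on $R(A^{T})$. The only point in the theorem itself that needs care is the conditioning: one must ensure that fixing the first $k$ iterations freezes $x^{(k)}$, the indices $i_k,i_{k-1}$, and the two vanishing residuals $r_{i_k}^{(k)}=r_{i_{k-1}}^{(k)}=0$, so that the sole randomness inside $E_k$ is the choice of $i_{k+1}$; this is precisely what lets the factor $\|x^{(k)}-x^{*}\|^2$ be pulled out of the expectation.
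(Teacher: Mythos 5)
Your proposal is correct and follows essentially the same route as the paper: Lemma \ref{lem2.4} with $\tilde{x}=x^{*}$ gives the orthogonality, Pythagoras gives the exact one-step identity, and the conditional expectation combined with the lower bound of the preceding lemma yields the contraction. In fact your write-up is slightly more careful than the paper's, since you explicitly verify $x^{(k)}-x^{*}\in R(A^{T})$ and argue that the contraction factor lies in $[0,1)$, points the paper leaves implicit.
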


\begin{proof}
 From lemma \ref{lem2.4} ( see the description of Fig. 2.1 and Fig. 2.2), we know that
$$\langle x^{(k+1)}-x^{(k)}, x^{(k+1)}-x^{*}\rangle=0.$$
Therefore,
$$\|x^{(k)}-x^{*}\|^2=\|(x^{(k)}-x^{(k+1)})+(x^{(k+1)}-x^{*})\|^2=\|x^{(k)}-x^{(k+1)}\|^2+\|x^{(k+1)}-x^{*}\|^2.$$
By
$$\|x^{(k)}-x^{(k+1)}\|^2=\|r_{i_{k+1}}^{(k)}\frac{w^{(i_k)}}{\|w^{(i_k)}\|^2}\|^2=\frac{|r_{i_{k+1}}^{(k)}|^2}{\|w^{(i_k)}\|^2},$$
we know that
\begin{equation}
\begin{array}{rl}
E_k\|x^{(k+1)}-x^*\|^2&=\|x^{(k)}-x^{*}\|^2-E_k\frac{|r_{i_{k+1}}^{(k)}|^2}{\|w^{(i_k)}\|^2}\\
 &\leq\|x^{(k)}-x^{*}\|^2-\frac{\sigma^2_{min}}{(m-2)(\|A\|_F^2-\sigma^2_{min})}\|x^{(k)}-x^*\|^2 \ \ (Lemma \ 2.3)\\
 &\leq\left(1-\frac{\sigma^2_{min}}{(m-2)(\|A\|_F^2-\sigma^2_{min})}\right)\|x^{(k)}-x^*\|^2.
 \end{array}
\end{equation}
It yields the desired results.
\end{proof}

{\bf Remark 4.} From Theorem 3.1, we see that the convergence rate of the RKO method is faster than that of the RK method.

\section{Numerical Experiments}
In this section, we will present some experiment results of the Kaczmarz (K) method, randomized Kaczmarz (RK) method (with uniform probability), the Kaczmarz method with oblique projection (KO) and randomized Kaczmarz method with oblique projection (RKO) for solving the consistent linear system \eqref{1.1} with the coefficient matrix $A \in R^{m\times n}$ from three sources: Gaussian matrix, some real world matrices and {\bf Sprand} matrix.

In our implementations, the right vector $b\in R^n$ is chosen such that the exact solution $x^*\in R^n$ is a vector with all $1's$.
Define the relative solution error (RSE) at the $k$th iteration as follows:
$$\text{RSE}=\frac{\|x^{(k)}-x^*\|^2}{\|x^*\|^2}.$$
The initial point $x^{(0)}\in R^n$ is set to be a zero vector, and the iterations are terminated once the relative solution error satisfies $\text{RSE}<0.5\times 10^{-6}$ or the number of iteration steps exceeds 100,000. If the number of iteration steps exceeds 100,000, it is denoted as ``-".

We will compare the numerical performance of these methods in terms of the number of iteration steps (denoted as ``IT") and the computing time in seconds (denoted as ``CPU(s)"). Here the CPU(s) and IT mean the arithmetical averages of the elapsed running
times and the required iteration steps with respect to 50 trials repeated runs of the corresponding method.

All experiments are carried out by using MATLAB (version R2017b) on a DESKTOP-8CBRR86
with Intel(R) Core(TM) i7-4790, CPU 3.60GHz, RAM 8GB and Windows 10.

\begin{exm}\label{EX4.1}  \upshape Gaussian matrix. The Gaussian matrix is randomly generated by using the MATLAB function rand.
Consider the linear system \eqref{1.1} with $A=rand(m,500)$. The numerical results are reported in Table \ref{tab1}. From the table, we can conclude some observations as follows. First, the KO and RKO methods outperform the K and RK methods in terms of the iteration step. The number of iteration steps of the latter two is approximately twice that of the former two respectively. Second, we see that the KO method has an advantage over the K method in CPU. Third, the RKO method requires almost the same iteration steps as the KO method and consumes much less than the other three methods. Finally, we observe that under the same conditions (the same starting vector, the number of rows $m$ and termination condition) the execution time by the KO and RKO methods are quite less than that by the Kaczmarz method and the RK method, respectively.

\begin{table}

\centering
\begin{tabular}{ ccccccccc }
\hline
 Method & \multicolumn{2}{c}{K} & \multicolumn{2}{c}{KO} & \multicolumn{2}{c}{RK} & \multicolumn{2}{c}{RKO }  \\
\cline{2-9}
$m\times n$ &           IT&CPU&            IT&CPU &               IT&CPU&             IT&CPU\\
\hline
$1000\times 500$ &    81858&     0.4886 &  23895& 0.2485&67857&  5.8256	&27482& 0.7393\\
$2000\times 500$ &    32535&	0.2008&	9657&	0.1013&	30872&	1.7679&	12224&	 0.3742\\
$3000\times 500$ &    24219&	0.1516&	8488&	0.1239&	24504&	1.3120&	10890&	 0.3843\\
$4000\times 500$ &    21224&	0.1390&	8138&	0.1060&	23645&	1.3236&	9830&	 0.3587\\
$5000\times 500$ &    19360&	0.1357&	8381&	0.1169&	21313&	1.1556&	9940&	 0.3682\\
$6000\times 500$ &    18889&	0.1355&	8250&	0.1117&	20319&	1.0759&	9653&	 0.3907\\
$7000\times 500$ &    17529&	0.1367&	8263&	0.1388&	20129&	1.0678&	9054&	 0.3623\\
$8000\times 500$ &    17406&	0.1562&	8355&	0.1326&	18108&	0.9087&	9779&	 0.4008\\
$9000\times 500$ &    16959&	0.1315&	8414&	0.1262&	17575&	0.8727&	9686&	 0.4167\\
$10000\times 500$&    16658&	0.1234&	8151&	0.1198&	18122&	0.8863&	9579&	 0.4064\\
\hline
\end{tabular}
\caption{IT and CPU of K, KO, RK and RKO for m-by-n matrices A with n=500 and different m when the linear system is consistent.}
\label{tab1}
\end{table}
\end{exm}

\begin{exm}\label{EX4.2}\upshape Real world matrix.
The real world sparse matrices are taken from \cite{DH11}, which include well-conditioned matrices and ill-conditioned matrices. The properties of different sparse matrices are shown in the Table \ref{tab2} We list the numbers of IT and the CPU for the four methods in Table \ref{tab3}. The results show that the KO and RKO methods can always successfully compute an approximate solution to the linear system \eqref{1.1}, but the K and RK methods fail for the matrices $WorldCities$ and $well1033$ due to the numbers of the iteration steps exceeding 100,000. For all convergent cases, IT and CPU of the KO method are considerably smaller than those of the K method. In the meantime, the RKO method significantly outperforms the RK method in terms of both IT and CPU, too.
\end{exm}

\begin{table}

\centering
\begin{tabular}{ ccccccc }
\hline
\multicolumn{2}{c}{ name  }           &Stranke94    & Trefethen$_{-}$20  & ash608 & WorldCities    & well1033\\
\hline
 \multicolumn{2}{c}{ $m\times n$  }   &$10\times 10 $ & $20\times20$    & $608\times188$       & $315\times100$  & $1033\times320$  \\

\multicolumn{2}{c}{ density   }       &$90.00\%$   & $39.50\%$  & $1.06\%$  & $23.87\%$ & $1.43\%$\\

\multicolumn{2}{c}{ cond(A)  }        & 51.73 &  63.09 &  3.37 & 66.00 &166.13 \\
\hline
\end{tabular}
\caption{The properties of different sparse matrices.}
\label{tab2}
\end{table}

\begin{table}

\centering
\begin{tabular}{ ccccccccc }
\hline
 Method & \multicolumn{2}{c}{K} & \multicolumn{2}{c}{KO} & \multicolumn{2}{c}{RK} & \multicolumn{2}{c}{RKO }  \\
\cline{2-9}
name &           IT&CPU&            IT&CPU &               IT&CPU&             IT&CPU\\
\hline
 Stranke94         &   5878&    0.0707 & 3846	& 0.0446 &14020	& 0.4748 &3517& 0.0633\\
 Trefethen$_{-}$20 &   201&	     0.0012&	  111&	 0.0008&		1186	&	 0.0157	&	742&	0.0114\\
 ash608          &    2652	    &0.0219	&	1705	&	0.0186	&	1211	&	 0.0194	&	998&	0.0211\\
 WorldCities     &   -&	-&  	20317&		0.6395&	-&	-&22257&0.8953\\
 well1033       &-&-  &66079&	5.1150	&-&-&168887&	28.9278\\
\hline
\end{tabular}
\caption{IT and CPU of K, KO, RK and RKO for m-by-n matrices A with different m and n.}
\label{tab3}
\end{table}

\begin{exm}\label{EX4.3}\upshape Uniformly distributed matrix on $[c,1]$. Consider the consistent linear system \eqref{1.1} with uniformly distributed coefficient matrix $A\in R^{m\times m}$ on $[c,1]$, which is generated from the MATLAB function sprand. We perform several experiments to compare IT and CPU of the four methods. All methods are run with the same fixed initial (zero vector) estimate and the fixed matrix. The numerical results are reported in Table \ref{tab4}.

\begin{table}

\centering
\begin{tabular}{ ccccccccc }
\hline
Method & \multicolumn{2}{c}{K} & \multicolumn{2}{c}{KO} & \multicolumn{2}{c}{RK} & \multicolumn{2}{c}{RKO }  \\
\cline{2-9}
$m=10^{4},n=500$ &           IT&CPU&            IT&CPU &               IT&CPU&             IT&CPU\\
\hline
$c=0.15$ & 25193&	0.1777&	8281&	0.1317&	27007.6&	1.6106&	  8544.9&	0.3826\\
$c=0.25$&  36825&	0.2529&	8014&	0.1266&	37109.8&	2.4720&	   8398.3&	0.3760\\
$c=0.5$&   -&	-&	7069&	0.1163&	96565&	    11.0027&	7680.4&	0.3427\\
$c=0.75$&  -&	-&	6477&	0.1080&	-&	    -&	6431.8&	0.2924\\
$c=0.9$&   -&	-&	4953&	0.0907&	-&	    -&	5011.1&	0.2462\\
\hline
\end{tabular}
\caption{$A\in R ^{10000\times 500}$ uniformly distributed on $[c,1]$.}
\label{tab4}
\end{table}

\begin{table}

\centering
\begin{tabular}{ ccccccccc }
\hline
 Method & \multicolumn{2}{c}{K} & \multicolumn{2}{c}{KO} & \multicolumn{2}{c}{RK} & \multicolumn{2}{c}{RKO }  \\
\cline{2-9}
m&           IT&CPU&            IT&CPU &               IT&CPU&             IT&CPU\\
\hline
$m=2000$&	34125&	0.1982&	10414&	0.1247  &31238.3&  1.8202	&11432	&0.3663\\
$m=4000$&	21480&	0.1364	&8412&	0.1109	&22301.3&  1.1980	&9293.2	&0.3618\\
$m=6000$&	18425&	0.1369	&8485&	0.1191	&20010.8&  1.0364	&8960.6	&0.3649\\
$m=8000$&	16966&	0.1422	&8114&	0.1318	&19509&    0.9777   &8913	&0.3901\\
$m=10000$&	16302&	0.1267	&8347&  0.1232	&18599&	   0.9458	&8741	&0.3746\\
\hline
\end{tabular}
\caption{$A$ uniformly distributed on $[0,1]$.}
\label{tab5}
\end{table}

Table \ref{tab4} shows that the KO and RKO methods can always successfully solve the problem \eqref{1.1}. The K and RK methods cannot obtain a solution when c is close to 1, because the number of iteration steps exceeds 100,000. Besides, when all four methods converge, the KO and RKO methods are significantly better than the K and RK methods in terms of iteration step and CPU, respectively. It can be seen from Table \ref{tab5} that the KO method is significantly better than the K method in the number of iteration steps, and the KO method also requires less CPU to achieve convergence. Also, the RKO method is significantly better than the RK method in terms of iterative steps and CPU. In fact, the CPU required by the RK method is about three times that of the RKO method.

\end{exm}

\section{Conclusions}

Based on the oblique projections to the hyperplanes, we derive a new extension of the Kaczmarz method. The non-random KO method greatly improves the Kaczmarz method. Compared with the RK method, the Randomized version of oblique projection (RKO) can greatly reduce the number of iterations and running time for solving large-scale overdetermined consistent systems of equations $Ax=b$, especially for the uniformly distributed random data $A$ and $b$. Numerical experiments show the effectiveness of the two methods for uniformly distributed random data.

\bibliographystyle{model1-num-names}
\bibliography{datako}

\begin{thebibliography}{16}
\expandafter\ifx\csname natexlab\endcsname\relax\def\natexlab#1{#1}\fi
\providecommand{\bibinfo}[2]{#2}
\ifx\xfnm\relax \def\xfnm[#1]{\unskip,\space#1}\fi
\bibitem[{Kaczmarz(1937)}]{K37}
\bibinfo{author}{S.~Kaczmarz},
\newblock \bibinfo{title}{Angen$\ddot{a}$herte aufl$\ddot{o}$sung von systemen
  linearer gleichungen},
\newblock \bibinfo{journal}{Bull. Internat. Acad. Polon.Sci. Lettres A}
  \bibinfo{volume}{29} (\bibinfo{year}{1937}) \bibinfo{pages}{335--357}.
\bibitem[{Gordon et~al.(1970)Gordon, Bender, and Herman}]{GBH70}
\bibinfo{author}{R.~Gordon}, \bibinfo{author}{R.~Bender},
  \bibinfo{author}{G.~T. Herman},
\newblock \bibinfo{title}{Algebraic reconstruction techniques ({ART}) for
  three-dimensional electron microscopy and x-ray photography},
\newblock \bibinfo{journal}{J. Theor. Biol.} \bibinfo{volume}{29}
  (\bibinfo{year}{1970}) \bibinfo{pages}{471--481}.
\bibitem[{Popa(2018)}]{P17}
\bibinfo{author}{C.~Popa},
\newblock \bibinfo{title}{Convergence rates for {K}aczmarz-type algorithms},
\newblock \bibinfo{journal}{Numer. Algor.} \bibinfo{volume}{79}
  (\bibinfo{year}{2018}) \bibinfo{pages}{1--17}.
\bibitem[{Strohmer and Vershynin(2009)}]{KV09}
\bibinfo{author}{T.~Strohmer}, \bibinfo{author}{R.~Vershynin},
\newblock \bibinfo{title}{A randomized {K}aczmarz algorithm with exponential
  convergence},
\newblock \bibinfo{journal}{J. Fourier Anal. Appl.} \bibinfo{volume}{15}
  (\bibinfo{year}{2009}) \bibinfo{pages}{262--278}.
\bibitem[{Agmon(1954)}]{A54}
\bibinfo{author}{S.~Agmon},
\newblock \bibinfo{title}{The relaxation method for linear inequalities},
\newblock \bibinfo{journal}{Canadian J. Math.} \bibinfo{volume}{6}
  (\bibinfo{year}{1954}) \bibinfo{pages}{382--392}.
\bibitem[{Motzkin and Schoenberg(1954)}]{MS54}
\bibinfo{author}{T.~S. Motzkin}, \bibinfo{author}{I.~J. Schoenberg},
\newblock \bibinfo{title}{The relaxation method for linear inequalities},
\newblock \bibinfo{journal}{Canadian J. Math.} \bibinfo{volume}{6}
  (\bibinfo{year}{1954}) \bibinfo{pages}{393--404}.
\bibitem[{Censor(1981)}]{C81}
\bibinfo{author}{Y.~Censor},
\newblock \bibinfo{title}{Row-action methods for huge and sparse systems and
  their applications},
\newblock \bibinfo{journal}{SIAM Rev.} \bibinfo{volume}{23}
  (\bibinfo{year}{1981}) \bibinfo{pages}{444--466}.
\bibitem[{Nutini et~al.(2016)Nutini, Sepehry, Virani, Laradji, Schmidt, and
  Koepke}]{NSV16}
\bibinfo{author}{J.~Nutini}, \bibinfo{author}{B.~Sepehry},
  \bibinfo{author}{A.~Virani}, \bibinfo{author}{I.~Laradji},
  \bibinfo{author}{M.~Schmidt}, \bibinfo{author}{H.~Koepke},
\newblock \bibinfo{title}{Convergence {R}ates for {G}reedy {K}aczmarz
  {A}lgorithms},
\newblock \bibinfo{journal}{UAI}  (\bibinfo{year}{2016}).
\bibitem[{De~Loera et~al.(2017)De~Loera, Haddock, and Needell}]{LHN17}
\bibinfo{author}{J.~De~Loera}, \bibinfo{author}{J.~Haddock},
  \bibinfo{author}{D.~Needell},
\newblock \bibinfo{title}{A sampling {K}aczmarz-{M}otzkin algorithm for linear
  feasibility},
\newblock \bibinfo{journal}{SIAM J. Sci. Comput.} \bibinfo{volume}{39}
  (\bibinfo{year}{2017}) \bibinfo{pages}{S66--S87}.
\bibitem[{Ansorge(1984)}]{A84}
\bibinfo{author}{R.~Ansorge},
\newblock \bibinfo{title}{Connections between the {C}immino-method and the
  {K}aczmarz-method for the solution of singular and regular systems of
  equations},
\newblock \bibinfo{journal}{Computing} \bibinfo{volume}{33}
  (\bibinfo{year}{1984}) \bibinfo{pages}{367--375}.
\bibitem[{Petra and Popa(2016)}]{PP16}
\bibinfo{author}{S.~Petra}, \bibinfo{author}{C.~Popa},
\newblock \bibinfo{title}{Single projection {K}aczmarz extended algorithms},
\newblock \bibinfo{journal}{Numer. Algor.} \bibinfo{volume}{73}
  (\bibinfo{year}{2016}) \bibinfo{pages}{791--806}.
\bibitem[{Bai and Wu(2018)}]{BW18SISC}
\bibinfo{author}{Z.~Bai}, \bibinfo{author}{W.~Wu},
\newblock \bibinfo{title}{On greedy randomized {K}aczmarz method for solving
  large sparse linear systems},
\newblock \bibinfo{journal}{SIAM J. Sci. Comput.} \bibinfo{volume}{40}
  (\bibinfo{year}{2018}) \bibinfo{pages}{A592--A606}.
\bibitem[{Bai and Wu(2019)}]{BW19NLAA}
\bibinfo{author}{Z.~Bai}, \bibinfo{author}{W.~Wu},
\newblock \bibinfo{title}{On greedy randomized coordinate descent methods for
  solving large linear least-squares problems},
\newblock \bibinfo{journal}{Numer. Linear Algebr.} \bibinfo{volume}{26(4)}
  (\bibinfo{year}{2019}) \bibinfo{pages}{1--15}.
\bibitem[{Guan et~al.(2020)Guan, Li, Xing, and Qiao}]{GL20}
\bibinfo{author}{Y.~Guan}, \bibinfo{author}{W.~Li}, \bibinfo{author}{L.~Xing},
  \bibinfo{author}{T.~Qiao},
\newblock \bibinfo{title}{A note on convergence rate of randomized {K}aczmarz
  method},
\newblock \bibinfo{journal}{Calcolo} \bibinfo{volume}{57(26)}
  (\bibinfo{year}{2020}) \bibinfo{pages}{1--11}.
\bibitem[{Eggermont et~al.(1981)Eggermont, Herman, and Lent}]{EHL81}
\bibinfo{author}{P.~Eggermont}, \bibinfo{author}{G.~Herman},
  \bibinfo{author}{A.~Lent},
\newblock \bibinfo{title}{Iterative algorithms for large partitioned linear
  systems, with applications to image reconstruction},
\newblock \bibinfo{journal}{Linear Algebra Appl.} \bibinfo{volume}{40}
  (\bibinfo{year}{1981}) \bibinfo{pages}{37--67}.
\bibitem[{Davis and Hu(2011)}]{DH11}
\bibinfo{author}{T.~Davis}, \bibinfo{author}{Y.~Hu},
\newblock \bibinfo{title}{The university of {F}lorida sparse matrix
  collection},
\newblock \bibinfo{journal}{ACM Trans. Math. Softw.} \bibinfo{volume}{38(1)}
  (\bibinfo{year}{2011}) \bibinfo{pages}{1--25}.

\end{thebibliography}

\end{document}